\documentclass[12pt]{amsart}


\usepackage{amsfonts,amsmath,latexsym,amssymb,verbatim,amsbsy,amsthm}




\setlength{\textheight}{660pt} \setlength{\textwidth}{470pt}
\addtolength{\hoffset}{-.7in}
\addtolength{\voffset}{-.5in}

\theoremstyle{plain}
\newtheorem{THEOREM}{Theorem}[section]
\newtheorem{theorem}[THEOREM]{Theorem}

\newtheorem{lemma}[THEOREM]{Lemma}

\theoremstyle{definition}

\theoremstyle{remark}
\newtheorem{remark}[THEOREM]{Remark}
\newtheorem{claim}[THEOREM]{Claim}


\newcommand{\thm}[1]{Theorem~\ref{#1}}
\newcommand{\lem}[1]{Lemma~\ref{#1}}


\def \a {\alpha}
\def \b {\beta}
\def \g {\gamma}
\def \d {\delta}

\def \g {\gamma}
\def \e {\varepsilon}
\def \f {\varphi}
\def \l {\lambda}
\def \n {\nabla}
\def \s {\sigma}
\def \th {\theta}
\def \o {\omega}
\def \r {\rho}

\def \D {\Delta}



\def \cE {\mathcal{E}}

\newcommand{\R}{\ensuremath{\mathbb{R}}}   



\def \p {\partial}
\def \ra {\rightarrow}

\def \loc {\mathrm{loc}}

\newcommand{\der}[2]{#1 \cdot \nabla #2}
\newcommand{\ave}[2]{\langle #1 \rangle_{#2}}
\newcommand{\aave}[3]{\langle #1 \rangle_{#2,#3}}

\DeclareMathOperator{\diver}{div} %
\DeclareMathOperator{\dom}{Dom} %
\DeclareMathOperator{\tr}{Tr} %
\DeclareMathOperator{\vol}{Vol} %


\begin{document}

\title[Self-similar blowup for the Euler equation]
{On the energy behavior of locally self-similar blowup for the Euler equation}
\author{Anne Bronzi}
\thanks{The work of A. Bronzi is supported by CNPq, Conselho Nacional de Desenvolvimento Cient\'ifico e Tecnol\'ogico - Brasil, grant 236994/2012-3}
\author{Roman Shvydkoy}
\thanks{The work of R. Shvydkoy is partially supported by NSF grant DMS--1210896}
\address[A. Bronzi and R. Shvydkoy]
{Department of Mathematics, Stat. and Comp. Sci.\\
M/C 249,\\
      University of Illinois\\
      Chicago, IL 60607} 
\email{annebronzi@gmail.com}
\email{shvydkoy@uic.edu}

\subjclass[2000]{Primary:76B03; Secondary:35Q31}

\maketitle

\begin{abstract}

In this note we study locally self-similar blow up for the Euler equation. The main result states  that under a mild $L^p$-growth assumption on the profile $v$, namely, $\int_{|y| \sim L} |v|^p dy \lesssim L^{\g}$ for some $\g <p-2$, the self-similar solution carries a positive amount of energy up to the time of blow-up $T$, namely, $\int_{|y| \sim L} |v|^2 dy \sim L^{N-2\a}$. The result implies and extends several previously known exclusion criteria. It also supports a general conjecture relating fractal local dimensions of the energy measure with the rate of velocity growth at the time of possible blowup.

\end{abstract}


\section{Description of the result}
Let $u\in C([0,T), H^s(\mathbb R^N))$, for some $s > \frac{N}{2} + 1$, $N\geq 3$, be a solution of
the Euler equations:
\begin{equation}\label{EE}
\begin{split}
u_t + \der{u}{u} + \n p & =0\\
\n \cdot u & = 0.
\end{split}
\end{equation}
The pressure can be recovered from the Poisson equation $\D p = -\diver(\diver(u\otimes u))$.
Up to a harmonic polynomial the solution is given by
\begin{equation}\label{pressure}
p(x)=-\frac{|u(x)|^2}{N}+P.V.\int_{\R^N}K_{ij}(x-y)u_i(y)u_j(y)dy,
\end{equation}
where $K_{ij}(y) = \frac{y_i y_j - \frac{\d_{ij}}{N}|y|^2}{ \o_N |y|^{N+2}}$, 
and $\o_N = 2\pi^{N/2}(N\Gamma(N/2))^{-1}$ is the volume of the unit ball in $\R^N$. Since, in view of \eqref{EE}, $\n p$ has to decay at infinity, formula \eqref{pressure} defines the only solution up to a constant. In this note we study locally self-similar solutions given by 
\begin{equation}\label{self-similar}
u(x,t)=\frac{1}{(T-t)^{\frac{\alpha}{1+\alpha}}}v\left(\frac{x-x_0}{
(T-t)^{\frac{1}{1+\alpha}}}\right)
\end{equation}
in a ball $x\in B_{\r_0}(x_0)$, $t<T$, and some fixed $\alpha>0$, and we assume that the profile field is locally smooth, $v\in C^3_{\loc}(\R^N)$. Our motivation to study such a blow-up scenario comes from abundant numerical evidence suggesting that singular solutions tend to form self-similar structures, in anisotropic fashion  \cite{kerr}, isotropic vortex knot formations \cite{pelz}, and more recently on the boundary of a fluid domain \cite{guo} to mention a few. Rigorous analysis of \eqref{self-similar} has a relatively recent history (see works of Chae \cite{cha,chae-11,chae-shv}, He \cite{he-bdd,he-ext} and Schonbeck \cite{schon}). Below we will recite results that are most relevant to this present note. For now let us address two important issues that arise directly from the set up. First, in much of the literature the ansatz \eqref{self-similar} is postulated along with the corresponding pressure 
\begin{equation}\label{e:pres1}
p (x,t) = \frac{1}{(T-t)^{\frac{2\alpha}{1+\alpha}}}q\left(\frac{x-x_0}{
(T-t)^{\frac{1}{1+\alpha}}}\right),
\end{equation}
in the same ball $B_{\r_0}(x_0)$. While in globally self-similar case ($\rho_0 = \infty$) this form of the pressure can be easily justified, in the local case it becomes overdetermined as $p$ is already recovered via \eqref{pressure}. To resolve the problem we show in \lem{l:pressure} that indeed \eqref{e:pres1} holds up to a time dependent constant $c(t)$ with at most polynomial growth as $t \ra T$, and the pair $(v,q)$ solves \eqref{EE} in self-similar variables.

Second, the conservation of total energy $\|u(t)\|_2 = \|u_0\|_2$ implies in particular that the energy in the ball $B_{\r_0}(x_0)$ remains bounded. Since 
\begin{equation}\label{en-ball}
\|u(t)\|_{L^2(B_{\r_0}(x_0))}^2  = \frac{1}{L^{N-2\a}}\int_{|y|<\r_0 L} |v(y)|^2 dy,
\end{equation}
for $L = (T-t)^{-1/(1+\a)}$, this implies the bound
\begin{equation}\label{h1}
\int_{|y|<L} |v(y)|^2 dy \lesssim L^{N-2\a}.
\end{equation}
Here and in the future, $A\lesssim B$ means $A/B$ is bounded for large $L$, and $A\sim B$ means $A \lesssim B$ and $B \lesssim A$. Our main result states that under a mild growth bound on higher $L^p$-norms one can reverse inequality \eqref{h1}.

\begin{theorem}\label{t:main} Suppose $u\in C([0,T), H^s(\mathbb R^N))$ is a solution to \eqref{EE} locally self-similar in a ball $B_{\r_0}(x_0)$ with profile $v\in C^3_{\loc}(\R^N)$ and scaling $0<\a<\frac{N}{2}$. Suppose further that for some $p\geq 3$ and $\g <p-2$,
\begin{equation}\label{e:p-bound}
\int_{|y|\sim L} |v(y)|^p dy \lesssim L^{\g}, \text{ for large } L.
\end{equation}
Then either $v = 0$ or one has
\begin{equation}\label{e:2-bound}
L^{N-2\a} \lesssim \int_{|y|<L} |v(y)|^2 dy \lesssim L^{N-2\a}.
\end{equation}
\end{theorem}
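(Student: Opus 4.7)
The upper bound is already provided by \eqref{h1}, so I focus on the lower bound. From \lem{l:pressure}, the profile $(v,q)$ satisfies the stationary system
\begin{equation*}
\alpha v + (y\cdot\nabla)v + (1+\alpha)\bigl[(v\cdot\nabla)v + \nabla q\bigr] = 0,\qquad \nabla\cdot v = 0
\end{equation*}
on $\R^N$. My plan is to derive a first-order ODE for $E(L):=\int_{B_L}|v|^2\,dy$ and then analyze it using the two growth bounds. Dotting the equation with $v$, integrating over $B_L$, and using $\nabla\cdot v=0$ together with the divergence theorem leads to the identity
\begin{equation*}
(N-2\alpha)\,E(L) = L\,E'(L) + (1+\alpha)\,B(L),\qquad B(L):=\int_{\partial B_L}\bigl(|v|^2+2q\bigr)v\cdot\nu\,dS.
\end{equation*}
Setting $\Phi(L):=L^{-(N-2\alpha)}E(L)$ this rewrites as $\Phi'(L)=-(1+\alpha)B(L)L^{-(N-2\alpha+1)}$, whose integration from a fixed $L_0$ gives $\Phi(L)=\Phi(L_0)-(1+\alpha)\int_{L_0}^L B(r)r^{-(N-2\alpha+1)}\,dr$.

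The pivotal step is to show the tail integral converges as $L\to\infty$. By coarea its absolute value is controlled by $\int_{|y|>L_0}(|v|^3+2|q||v|)/|y|^{N-2\alpha+1}\,dy$. Splitting into dyadic shells $\{|y|\sim 2^j\}$ and using log-convex interpolation between \eqref{h1} and \eqref{e:p-bound} yields $\int_{|y|\sim R}|v|^3\,dy\lesssim R^{\sigma}$ with $\sigma=((p-3)(N-2\alpha)+\gamma)/(p-2)$; the $j$-th dyadic contribution then carries exponent $\sigma-(N-2\alpha+1)=(\gamma-(p-2)-(N-2\alpha))/(p-2)$, strictly negative because $\gamma<p-2$ and $N-2\alpha>0$. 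For the pressure, $q$ solves $\Delta q=-\partial_i\partial_j(v_iv_j)$ globally; a local Calder\'on--Zygmund estimate (combined with the pinning of $q$ up to additive constants from \lem{l:pressure}) gives $\int_{|y|\sim R}|q|^{p/2}\,dy\lesssim R^\gamma$, and the H\"older split $\int|q||v|\le\|q\|_{L^{p/2}}\|v\|_{L^{p/(p-2)}}$ with the same interpolation closes this contribution with a matching negative exponent. Summing the resulting geometric series delivers both absolute convergence of the integral and the quantitative tail bound $|\Phi(\infty)-\Phi(L)|\lesssim L^{-\delta}$ with $\delta=((p-2)+(N-2\alpha)-\gamma)/(p-2)>0$, where $\Phi(\infty)$ denotes the resulting limit.

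This produces the final dichotomy: if $\Phi(\infty)>0$, then $E(L)\sim L^{N-2\alpha}$ and we are done; if $\Phi(\infty)=0$, then the tail bound upgrades \eqref{h1} to $E(L)\lesssim L^{N-2\alpha-\delta}$. Feeding this sharper bound back into the interpolation and repeating yields $E(L)\lesssim L^{\beta_k}$ along the linear recursion $\beta_{k+1}=((p-3)\beta_k+\gamma-(p-2))/(p-2)$, a contraction (rate $(p-3)/(p-2)\in[0,1)$) toward the fixed point $\beta^\star=\gamma-p+2<0$. Hence eventually $\beta_k<0$; since $E$ is non-decreasing in $L$ while $L^{\beta_k}\to 0$, this forces $E\equiv 0$, i.e.\ $v\equiv 0$. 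The principal technical obstacle is the pressure estimate inside the convergence step: one has to marry a local interior Calder\'on--Zygmund bound on $q$ with the annular $L^p$-data on $v$, and invoke \lem{l:pressure} to neutralize the harmonic-polynomial freedom in $q$.
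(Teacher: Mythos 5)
Your skeleton (the Pohozaev-type identity for $E(L)$, the monotone quantity $\Phi(L)=L^{-(N-2\alpha)}E(L)$, the dichotomy on $\Phi(\infty)$, and a bootstrap of decay exponents toward the fixed point $\gamma-p+2$) is essentially a clean repackaging of what the paper does: the Chae--Shvydkoy local energy inequality it starts from is the cutoff version of your surface-flux identity, and your recursion $\beta_{k+1}=\frac{(p-3)\beta_k+\gamma-(p-2)}{p-2}$ is exactly the paper's stabilized branch $a_{n+1}=b_n+1$ written in terms of $\beta=N-a$. Your treatment of the cubic term (interpolation exponent $\sigma$ and the negative per-shell exponent) is also correct.

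The genuine gap is the pressure term, and it is decisive. First, the claimed bound $\int_{|y|\sim R}|q|^{p/2}\,dy\lesssim R^{\gamma}$ does not follow from a ``local Calder\'on--Zygmund estimate'': $q$ is a global singular integral of $v\otimes v$, and the far-field contribution is controlled only by $L^2$-averages of $v$ over distant shells, as in \eqref{press-bound}; with the a priori bound \eqref{h1} this adds a term of size $R^{N-p\alpha}$ to $\int_{|y|\sim R}|q|^{p/2}$, which dominates $R^{\gamma}$ precisely in the regime $\gamma<N-p\alpha$ that the theorem must cover (it is the regime of the exclusion corollaries). Second, even granting your bound $R^{\gamma}$, the H\"older split $\int|q||v|\le\|q\|_{L^{p/2}}\|v\|_{L^{p/(p-2)}}$ does not close: on a shell $|y|\sim R$ one only has $\|v\|_{L^{p/(p-2)}}\lesssim R^{N(p-2)/p-\alpha}$, so the per-shell exponent after dividing by $R^{N-2\alpha+1}$ is $\alpha-1+\frac{2(\gamma-N)}{p}$, which is positive for instance when $N=3$, $\alpha=1.4$, $p=100$, $\gamma=97$; hence the tail integral is not shown to converge, and both halves of your dichotomy (existence of $\Phi(\infty)$ and the rate $L^{-\delta}$) collapse. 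The repair is what the paper does: estimate the pressure on shells in $L^{3/2}$ against $\|v\|_{L^3}$, using \eqref{press-bound} with $r=3/2$ so that $q$ is bounded by $L^2$- and $L^3$-averages of $v$; but then the pressure feeds the (slowly decaying) energy averages back into the iteration, which is why the paper's bootstrap has the second branch $a_{n+1}=\tfrac13 b_n+a_n+1$ and the accompanying claim, whereas your recursion tracks only the cubic term and silently assumes the pressure contribution is of lower order at every step.
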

Let us note again that the upper bound in \eqref{e:2-bound} is simply a consequence of the fact that $v$ is a part of the solution $u$ with finite energy. 
Before we embark on the proof, let us discuss applications of \thm{t:main} and its relation to previously known results.

\begin{remark}[Energy concentration] In view of \eqref{en-ball}, the conclusion of the theorem states that unless the profile $v$ is trivial, the self-similar blowup carries some positive amount of energy with it, i.e. $\|u(t)\|_{L^2(B_{\r_0}(x_0))}$ stays bounded away from zero as time $t$ approaches critical. The energy behavior at the time of blow-up can be described in more details in terms of the energy measure introduced in \cite{shv-en}. Energy measure is simply the weak$^*$-limit of $|u(x,t)|^2 dx$, as $t \ra T$, denoted $\cE_T$. As a consequence of \eqref{e:2-bound},  
\[
\cE_T(B_\r(x_0)) \sim \lim_{L \ra \infty} \frac{1}{L^{N-2\a}}\int_{|y|<\r L} |v(y)|^2 dy \sim \r^{N-2\a},
\]
for all small $\r$. This implies that in the case of non-trivial self-similar blow-up satisfying \eqref{e:p-bound} the exact fractal local dimension of $\cE_T$ exists at $x_0$ and is equal to $D = N-2\a$ (see \cite{triebel}). Results on the energy concentration obtained in \cite{shv-en} support the conjecture that $\dim_{\loc}(\cE_T,x_0) = D$ if and only if $\int_{t}^{T} \|u(s)\|_{L^\infty(\mathrm{near}\ x_0)} ds \sim (T-t)^{\frac{2}{N- D+2}}$. For self-similar solutions we have  $\|u(s)\|_{L^\infty(B_{\sim (T-s)^{1/(1+\a)}}(x_0))} \sim (T-s)^{-\a/(1+\a)}$, thus the integral yields the rate of decay of $(T-t)^{1/(1+\a)}$, which indeed coincides with $(T-t)^{\frac{2}{N- D+2}}$ for $D = N - 2\a$, as conjectured.
\end{remark}

\begin{remark}[Exclusion results] \thm{t:main} can serve as an exclusion result in those cases when \eqref{e:2-bound} is not valid a priori. For example, if in addition $\g < N-p\a$, or if $\g = N-p\a$ and $\int_{|y|\sim L} |v(y)|^p dy \lesssim o(1)$ (in which case necessarily $N<p\a + p-2$), then by the H\"older one also has $\int_{|y|\sim L} |v(y)|^2 dy \lesssim L^{N-2\a} o(1)$. Consequently, since $\a < N/2$, $\int_{|y|<L} |v(y)|^2 dy \lesssim L^{N-2\a}o(1)$, invalidating the lower bound in \eqref{e:2-bound}. This implies $v=0$. The conditions described above hold, in particular, under the assumptions $v \in L^p$ and $\a \leq N/p$, which recovers the exclusion result of Chae and Shvydkoy obtained in \cite{chae-shv}. Moreover, we can see that in the range $N/p<\a < N/2$, \thm{t:main} provides an extension of this result by requesting an extra decay of the $L^p$-norms over the shells $\{|y| \sim L\}$. 

Let us recall another exclusion condition exhibited in \cite{shv-en}:  if $|v(y)| \lesssim |y|^{1-\d}$, for some $\d>0$, and $\int_{|y|<L} |v(y)|^2 dy \lesssim L^{N-2\a}o(1)$ with $\a >\frac{N-2}{4}$, then $v=0$. We can now remove the extra assumption $\a >\frac{N-2}{4}$. Indeed, observe that for all $p>3$ we have 
\[
\int_{|y|\sim L} |v(y)|^p dy \lesssim L^{(p-2)(1-\d)}\int_{|y|\sim L} |v(y)|^2 dy \lesssim L^{(p-2)(1-\d)+N-2\a}.
\]
So, for $p$ large enough, we have $\g = (p-2)(1-\d)+N-2\a < p-2$, and hence $v=0$.
\end{remark}

\begin{remark}[Asymptotic behavior] Let us notice that the following $\a$-point vortex $v(y) = \frac{y^\perp}{|y|^{\a+1}}$ is a stationary solution to the 2D Euler equation, and is also globally self-similar with scaling exponent $\a$ (although not locally smooth). This and results of \cite{he-ext} in exterior domains suggest that asymptotic behavior at $\infty$ should be that of $|y|^{-\a}$ in general. \thm{t:main} expresses this very fact only phrased in terms of $L^2$-averages: $\frac{1}{\vol} \int_{|y|<L}|v|^2 dy \sim L^{-2\a}$. 
\end{remark}
\begin{remark}[Case $\a = N/2$] Let us comment on the energy conservative case $\a = N/2$, not covered by \thm{t:main}. The energy bound \eqref{h1} necessarily enforces the condition $v \in L^2$, which trivially implies the energy drain $\int_{|y|\sim L} |v|^2 dy = o(1)$ in contradiction to \eqref{e:2-bound}. However in this case we can't deduce \eqref{e:2-bound} from any $L^p$-bound on $v$. Instead, it was shown in \cite{shv-en}, via a general result on energy drain, that the decay rate of energy over the shells improves to $\int_{|y|\sim L} |v|^2 dy \lesssim \frac{1}{L^{N+2 - \d'}}$ for any $\d'>0$, provided the sublinear growth bound $|v(y)| \lesssim |y|^{1-\d}$ holds for some $\d>0$. Consequently, $v \in \cap_{\frac{N}{N+1} \leq p \leq N+4} L^p(\R^N)$ (see \cite{shv-en}). We cannot improve upon this result using present technique, however a direct argument can be made via the use of Muckenhoupt weights. 
\end{remark}

Let us finally note that the sublinear growth assumption, $|v(y)| \lesssim |y|^{1-\d}$, is natural in the sense that it breaks the scaling symmetry of the equation in self-similar variables (see \eqref{sseq} below):  if $(v,q)$ is a solution to \eqref{sseq}, then the new pair
\[
v_\l(y) = \l v(y/\l), \quad q_\l(y) = \l^2 q(y/\l)
\]
solves the same equation for any $\l \neq 0$. So, linear solutions to \eqref{sseq} are self-similar in their own sense. One can exhibit many examples  of such solutions:
\begin{equation}\label{e:natur}
\begin{split}
v(y) & = My, \quad q(y) = -\frac{1}{2} \langle (M+M^2)y, y\rangle, \\
\tr M & = 0, \quad M + M^2 \in \mathrm{Sym}_N.
\end{split}
\end{equation}
However, these cannot be a part of locally self-similar blow-up as they violate the energy bound \eqref{h1}.

\section{Recovery of pressure}

For reasons outlined in the introduction, we first have to recover the pressure in self-similar form, and obtain necessary estimates on the profile. This will in fact be the main technical part of the proof of the main theorem. It will be convenient to express various growth bounds in terms of averages over balls
$\ave{f}{L} = \frac{1}{\vol} \int_{|y|<L} f(y) dy$ or over shells $\aave{f}{L_1}{L_2} = \frac{1}{\vol} \int_{L_1<|y|<L_2} f(y) dy$. 

\begin{lemma}\label{l:pressure}
Suppose $(u,p)$ is a solution to \eqref{EE} with $u$ being locally self-similar in the ball $B_{\r_0}(x_0)$ with profile $v\in C^3_{\loc}(\R^N)$, $0<\a < N/2$. Assume that for some $p>2$, $M>0$, 
\begin{equation}\label{temp-v}
\aave{|v|^p}{L}{2L} \lesssim L^M, \text{ for large } L.
\end{equation}
Then the scalar function $q\in C^2_{\loc}(\R^N)$ given by 
\begin{equation}\label{pressure-q}
q(y)=-\frac{|v(y)|^2}{N}+\int_{\R^N}K_{ij}(y-z)v_i(z)v_j(z)dz
\end{equation}
solves the equation
\begin{equation}\label{sseq}
\frac{\a}{1+\a} v + \frac{1}{1+\a} y \cdot \n v + \der{v}{v} + \n q=0,
\end{equation}
and satisfies the bound
\begin{equation}\label{press-bound}
\aave{|q|^{r}}{L}{2L}^{1/r} \lesssim \ave{|v|^2}{L} + \aave{|v|^{2r}}{L/2}{4L}^{1/r}+\sum_{k=1}^\infty \aave{|v|^2}{2^kL}{2^{k+1}L},
\end{equation}
for all  $r>1$. Moreover, there is a bounded function $d(t)$ such that
\[
p(x,t) = \frac{1}{ (T-t)^{\frac{2\a}{\a+1}}} q\left( \frac{x-x_0}{ (T-t)^{\frac{1}{\a+1}} } \right)+ \frac{d(t)}{(T-t)^{\frac{2\a}{\a+1}}}
\]
holds in the ball $|x-x_0| < \r_0$ for all $t$ near $T$.
\end{lemma}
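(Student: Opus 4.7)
My plan is to proceed in three main steps: derive the self-similar form of the pressure equation, identify the resulting potential with $q$ (while verifying that $d(t)$ is bounded), and establish the $L^r$ estimate \eqref{press-bound}.

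\emph{Step 1 (self-similar pressure equation).} Setting $\l = (T-t)^{1/(1+\a)}$ and $y = (x-x_0)/\l$, direct substitution of \eqref{self-similar} gives
\[
u_t + \der{u}{u} = \l^{-2\a-1} F(v)(y), \qquad F(v) := \tfrac{\a}{1+\a} v + \tfrac{1}{1+\a}\, y \cdot \n v + \der{v}{v}.
\]
The momentum equation on $B_{\r_0}(x_0)$ thus reduces, in terms of $P(y,t) := \l^{2\a} p(x_0 + \l y, t)$, to $\n_y P(y,t) = -F(v)(y)$ on the rescaled ball $|y| < \r_0/\l$. Since the right-hand side is $t$-independent, we deduce $P(y,t) = \Phi(y) + d(t)$, where $\Phi$ extends to all of $\R^N$ by exhaustion as $t \ra T$ and satisfies $\n \Phi = -F(v)$ globally. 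The pressure formula then takes the form $p(x,t) = \l^{-2\a}\Phi(y) + \l^{-2\a}d(t)$ once $\Phi$ is identified with $q$.

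\emph{Step 2 (identification of $\Phi$ with $q$, and boundedness of $d(t)$).} The function $q$ defined by \eqref{pressure-q} satisfies $\D q = -\p_i\p_j(v_i v_j)$ by construction, while $\D \Phi = -\n \cdot F(v) = -\p_i v_j\, \p_j v_i = -\p_i\p_j(v_i v_j)$ using $\n \cdot v = 0$. Hence $\Phi - q$ is harmonic on $\R^N$; combining the polynomial growth of $\Phi$ (obtained by integrating $\n \Phi = -F(v)$ against the growth of $v$ from \eqref{temp-v} and its $C^3_{\loc}$ regularity) with the growth of $q$ from \eqref{press-bound}, Liouville's theorem for polynomially bounded harmonic functions forces $\Phi - q$ to be a polynomial. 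Since \eqref{pressure-q} determines $q$ only modulo harmonic polynomials of degree controlled by the exponent $M$, this polynomial can be absorbed into $q$, and the remaining constant into $d(t)$. To show $d(t)$ is bounded, I would evaluate $d(t) = \l^{2\a} p(x_0, t) - \Phi(0)$: the local term $\l^{2\a}(-|u(x_0,t)|^2/N)$ equals $-|v(0)|^2/N$, and the singular integral in \eqref{pressure} splits into a contribution from $B_{\r_0}(x_0)$, which after rescaling converges as the rescaled radius $\ra \infty$ to the corresponding part of $q(0)$, and an outer tail bounded uniformly in $t$ by $\r_0^{-N}\|u_0\|_2^2$ via energy conservation, which vanishes after multiplication by $\l^{2\a}$.

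\emph{Step 3 (the estimate \eqref{press-bound}).} I would split the singular integral in \eqref{pressure-q} into three regions relative to the shell $|y| \sim L$. The near zone $L/2 < |z| < 4L$ is handled by the standard Calder\'on--Zygmund $L^r$ bound applied to $v \otimes v$, yielding $\aave{|v|^{2r}}{L/2}{4L}^{1/r}$, which also absorbs the pointwise term $|v(y)|^2/N$. The inner zone $|z| < L/2$ uses the pointwise kernel bound $|K_{ij}(y-z)| \lesssim L^{-N}$ for $|y| \sim L$, producing $\ave{|v|^2}{L}$. The outer zone $|z| > 4L$ is decomposed dyadically; the bound $|K_{ij}(y-z)| \lesssim (2^k L)^{-N}$ on the $k$-th annulus produces the tail sum $\sum_k \aave{|v|^2}{2^k L}{2^{k+1} L}$.

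The chief difficulty is Step 2, the identification of $\Phi$ with $q$. The integral in \eqref{pressure-q} is not absolutely convergent under the bare growth bound \eqref{temp-v}, so it must be interpreted modulo harmonic polynomials whose degree depends on $M$; this freedom has to be reconciled with the specific anti-derivative $\Phi$ of $-F(v)$ singled out by the ambient normalization of $p$ at infinity in \eqref{pressure}. The constraint $\a < N/2$ and the $L^p$-growth hypothesis on $v$ are precisely what makes this matching possible.
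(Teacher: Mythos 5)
Your Steps 1 and 3 track the paper's argument (derivation of the self-similar pressure up to a time-dependent constant, and the three-zone inner/Calder\'on--Zygmund/dyadic-tail splitting for \eqref{press-bound}), but Step 2 has a genuine gap, and it sits exactly where the real content of the lemma lies. First, your premise that the integral in \eqref{pressure-q} ``is not absolutely convergent under the bare growth bound \eqref{temp-v}'' and hence defines $q$ only ``modulo harmonic polynomials'' is not correct: since $u$ has finite energy, the profile automatically satisfies $\int_{|y|<L}|v|^2\,dy \lesssim L^{N-2\a}$, so on the $k$-th dyadic annulus the kernel contributes $\lesssim (2^kL)^{-2\a}$, and the tail converges absolutely because $\a>0$. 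Thus $q$ is a single, pointwise-defined, locally smooth function, and there is no freedom to ``absorb'' a polynomial into it. Consequently your resolution of the harmonic difference $\Phi-q$ --- absorb the polynomial into $q$ and push the constant into $d(t)$ --- does not prove the stated lemma; you must actually show that this harmonic polynomial is a constant. That is what the paper's Step~4 does: it compares, on the balls $|y|\le \r_0(T-t)^{-1/(1+\a)}/2$, the rescaled representation \eqref{pressure} of $p$ with the integral formula for $q$, and shows the polynomial $h=q-\Phi$ satisfies $|h(y)-d(t)|\lesssim (T-t)^{2\a/(1+\a)}$ uniformly on these growing balls, which forces $h$ to be constant and simultaneously gives $d(t)=h+O((T-t)^{2\a/(1+\a)})$ bounded. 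Your evaluation of $d(t)$ at the single point $x_0$ presupposes the identification up to a constant and cannot replace this uniform comparison.

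Two further points. (i) To even conclude that $\Phi-q$ is a polynomial you need some global (tempered/polynomial) bound on $\Phi$; your proposed route, ``integrating $\n\Phi=-F(v)$ against the growth of $v$,'' is not available because $F(v)$ contains $y\cdot\n v$ and $\der{v}{v}$, and neither \eqref{temp-v} nor $C^3_{\loc}$ gives any growth control on $\n v$. The paper instead proves temperedness of the self-similar pressure directly from the ambient solution, using the weak-$L^1$ bound $\|p(t)\|_{1,\mathrm{weak}}\lesssim \|u(t)\|_2^2$ to get a polynomial a priori bound on the constant $c(t)$, and then polynomial growth of the $L^{p/2}$ shell integrals of $q$. (ii) In your Step 3, note that the inner- and outer-zone kernel bounds $|K_{ij}(y-z)|\lesssim |y|^{-N}$, resp.\ $\lesssim (2^kL)^{-N}$, are exactly the paper's estimates, so that part is fine.
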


\begin{proof} The strategy of the proof is the following. We first investigate the right hand side of \eqref{pressure-q}, denoted by $I$, and show that it defines a tempered distribution solving the Poisson equation 
\begin{equation}\label{Laplace}
\D I = -\diver(\diver(v \otimes v)).
\end{equation}
We also show that there exists another tempered distribution $q$ solving \eqref{sseq} and hence solving the same equation \eqref{Laplace}. We then conclude that the difference $q-I$ is a harmonic tempered distribution, hence is a polynomial. As a consequence of bounds established on the growth of both $q$ and $I$ at infinity we conclude that $q-I$ is a constant . The bound \eqref{press-bound} will be established in the course of the proof.

 \emph{Step 1}. First, it is easy to see that the integral in \eqref{pressure-q} converges pointwise a.e. Indeed, let us fix $L>1$, and consider a cutoff function $\f_0$ (infinitely smooth, equal $1$ for $|y| <1$, and $0$ for $|y| >2$), and rescaled one $\f_L(y) = \f_0(y/L)$. We obtain
\[
\begin{split}
\int K_{ij}(y-z)v_i(z)v_j(z)dz &= \int K_{ij}(y-z)\f_{3L}(z)v_i(z)v_j(z)dz \\
&+ \int K_{ij}(y-z)(1-\f_{3L}(z))v_i(z)v_j(z)dz\\
&= I_1(y) + I_2(y).
\end{split}
\]
Since $v\in C^3_{\loc}$ one has $I_1 \in C^{\b}$, for all $\b<3$ by the classical Besov estimates. Now, on the ball $|y|<L$, we have 
\[
\begin{split}
|\p^s I_2(y)| &\lesssim \sum_{k=1}^\infty \int_{|z| \sim 2^k L} \frac{1}{|z|^{N+s}} |v(z)|^2 dz \lesssim \sum_{k=1}^\infty  \frac{1}{(2^k L)^s}\ave{|v|^2}{2^k L} \\
& \lesssim  \sum_{k=1}^\infty (2^k L)^{-2\a-s} \lesssim L^{-2\a-s},
 \end{split}
\]
for all $s\geq 0$. This shows that the function defined by
\begin{equation}\label{e:I}
I(y) = -\frac{|v(y)|^2}{N}+\int_{\R^N}K_{ij}(y-z)v_i(z)v_j(z)dz
\end{equation}
is locally smooth and by the classical formula is a solution of the Poisson equation \eqref{Laplace}.

\noindent \emph{Step 2.} Let us now prove the bound \eqref{press-bound} for $I$. This will only be used in this proof to ensure that $I$ is a tempered distribution, when applied to $r = p/2$ and using \eqref{temp-v}, but in the sequel we will make a complete use of it. Let us write $I = -\frac{|v(y)|^2}{N} + J$, where $J$ stands for the integral in \eqref{e:I}. Clearly, only the bound for $J$ is necessary. So,  let us fix an $L>1$, consider $y$ in the shell $\{L<|y|<2L\}$ and $J$ into three integrals (the integrands are suppressed for brevity):
\[
J_1(y) = \int_{|z| <L/2}; \quad J_2(y) = \int_{L/2 < |z|<4L}; \quad J_3(y) = \int_{|z| >4 L}.
\]
Then,
\[
\int_{L< |y| < 2L } |J_1(y)|^{r} dy \leq \int_{L < |y| < 2L } \left( \frac{1}{|y|^{N}}\int_{|z| <L/2} |v(z)|^2 dz \right)^{r} dy \lesssim L^{N}\ave{|v|^2}{L}^r.
\]
Next, by the Calderon-Zygmund boundedness,
\[
\int_{L< |y| < 2L } |J_2(y)|^{r} dy \lesssim \int_{L/2 < |z| < 4L} |v|^{2r} dz \leq L^N  \aave{|v|^{2r}}{L/2}{4L}.
\]
Finally,
\[
\begin{split}
\int_{L< |y| < 2L } |J_3(y)|^{r} dy & =\int_{L< |y| < 2L } \left| \sum_{k=2}^\infty \int_{2^k L <|z|<2^{k+1} L} K_{ij}(y-z) v_i(z)v_j(z) dz \right|^{r} dy \\
&\leq \int_{L< |y| < 2L } \left( \sum_{k=2}^\infty \frac{1}{(2^kL)^N} \int_{2^k L <|z|<2^{k+1} L} |v|^2 dz  \right)^{r} dy \\
&\lesssim L^N \left( \sum_{k=2}^\infty \aave{|v|^2}{2^kL}{2^{k+1}L} \right)^{r}.
\end{split}
\]
This establishes \eqref{press-bound} for $I$. It implies that $I$ is a tempered distribution in view of \eqref{temp-v}, and as previously observed $I$ solves
\eqref{Laplace}.

\noindent \emph{Step 3.} Let us now find a tempered pressure solving \eqref{sseq}. Let us assume for simplicity that $x_0 = 0$. Plugging the ansatz \eqref{self-similar} into \eqref{EE} we see that the expression $(T-t)^{\frac{2\a+1}{\a+1}} \n p(x (T-t)^{\frac{1}{\a+1}}, t)$ is independent of time as long as $|x|\leq \r_0$. Letting $\bar{p}(x (T-t)^{-\frac{1}{1+\a}},t) = p(x,t)$ we conclude that  
$(T-t)^{\frac{2\a}{\a+1}} \n_y \bar{p}(y, t)$ is time independent on the region $|y| \leq \r_0 (T-t)^{-\frac{1}{1+\a}}$. So, in the family of functions 
\[
\left\{(T-t)^{\frac{2\a}{\a+1}} \bar{p}(\cdot, t), \dom = (|y| \leq \r_0 (T-t)^{-\frac{1}{1+\a}})\right\}_{T-t_0<t<T}
\]
the members differ pairwise by constants on their common domains.   Let us pick a monotone sequence $t_n \ra T$, and consider $q_0(y) = (T-t_0)^{\frac{2\a}{\a+1}} \bar{p}(y,t_0)$ defined on $|y| \leq \r_0 (T-t_0)^{-\frac{1}{1+\a}}$. Then for every $n\geq 1$ there exists $c_n\in \R$ such that $q_n(y) + c_n= (T-t_n)^{\frac{2\a}{\a+1}} \bar{p}(y,t_n) $ coincides with $q_0$ on its domain, and therefore $q_n = q_k$ for all $n,k\geq 1$ on the common domain of the pair. This unambiguously defines the function $q(y) = q_n(y)$ for all $|y| \leq \r_0 (T-t_n)^{-\frac{1}{1+\a}}$. For all other values of $t$ we have a scalar function $c(t)$ such that $q(y) + c(t)= (T-t)^{\frac{2\a}{\a+1}} \bar{p}(y,t) $ holds on the ball
$|y| \leq \r_0 (T-t)^{-\frac{1}{1+\a}}$. Thus,
\[
p(x,t) = \frac{1}{ (T-t)^{\frac{2\a}{\a+1}}} q\left( \frac{x}{ (T-t)^{\frac{1}{\a+1}} } \right)+c(t),
\]
for all $|x| \leq \r_0$ and $T-t_0<t<T$. If we plug this back into \eqref{EE} we recover \eqref{sseq} on the whole space. Let us now show that $q$ is a tempered distribution. We have a uniform bound $\|p(t)\|_{1,\mathrm{weak}} \lesssim \|u(t)\|_2 \leq C$. So, $| \{ x: |p(x,t)| >\l\}| \leq \frac{C}{\l}$, for all $t$. Hence, there is $\d>0$ small so that $| \{ x: |p(x,t)| >\frac{1}{\d (T-t)^{\frac{N}{1+\a}}} \}| \leq \frac{\o_N}{2} (T-t)^{\frac{N}{1+\a}}$, where $\o_N$ is the volume of the unit ball in $\R^N$. This implies that in the ball $|x| \leq (T-t)^{\frac{1}{1+\a}}$ there exists a point $x_t$ such that $|p(x_t,t)| \leq \frac{1}{\d (T-t)^{\frac{N}{1+\a}}} $. That implies that there exists $|y_t| \leq 1$ so that $|c(t)| \leq  \frac{1}{ (T-t)^{\frac{2\a}{\a+1}}}|q(y_t)| + \frac{1}{\d (T-t)^{\frac{N}{1+\a}}}$. Since $q$ is locally smooth in the unit ball, we obtain some polynomial bound $|c(t)| \lesssim (T-t)^{-M}$.  On the other hand, denoting $d(t) = -c(t)(T-t)^{\frac{2\a}{1+\a}}$ we have
\begin{equation}\label{qdp}
q(y) = d(t) + (T-t)^{\frac{2\a}{1+\a}}p(y (T-t)^{\frac{1}{1+\a}},t),
\end{equation}
for all $|y| \leq \r_0 (T-t)^{-\frac{1}{1+\a}}$. From the bounds above we thus obtain the following rough bound (here all $M_i \in \R$):
\[
\begin{split}
& \int_{\frac{\r_0}{4 (T-t)^{\frac{1}{1+\a}}} < |y|< \frac{\r_0}{2 (T-t)^{\frac{1}{1+\a}}}  } |q(y)|^{p/2} dy  \lesssim \\
& \lesssim (T-t)^{M_1} + (T-t)^{\frac{p\a-N}{1+\a}} \int_{\r_0/4 <|x|<\r_0/2} |p(x,t)|^{p/2} dx \\
&\lesssim (T-t)^{M_1} + (T-t)^{M_2}( \| u\|^p_{L^p( \r_0/8 <|y|<\r_0)} + \|u\|_{2}^{p/2}) \\
&\lesssim (T-t)^{M_3}. 
\end{split}
\]
Thus, the $L^{p/2}$-integrals of $q$ over dyadic shells grow at most polynomially. This shows that $q$ is a tempered distribution.

\noindent \emph{Step 4.} We now show that $q$ and $I$ differ by a constant. Since they both solve the Laplace equation \eqref{Laplace} and are both distributions on $\R^N$, their difference $q -I$ is a harmonic polynomial $h$. Let us show that $h$ is constant. For all $|y| \leq \frac{\r_0}{2(T-t)^{\frac{1}{1+\a}}}$, we have from \eqref{pressure},
\[
\begin{split}
&(T-t)^{\frac{2\a}{1+\a}} p(y (T-t)^{\frac{1}{1+\a}}, t) =  \\ &= -\frac{1}{N}|v(y)|^2 + \int_{|z| \leq \r_0} K_{ij}( y (T-t)^{\frac{1}{1+\a}} - z) (v_iv_j)( z/(T-t)^{\frac{1}{1+\a}}) dz \\
&+ (T-t)^{\frac{2\a}{1+\a}} \int_{|z|>\r_0} K_{ij}( y (T-t)^{\frac{1}{1+\a}} - z) (v_i v_j)(z,t) dz \\
& = -\frac{1}{N}|v(y)|^2 + \int_{|z| \leq \r_0 / (T-t)^{\frac{1}{1+\a}}} K_{ij}( y  - z) (v_iv_j)( z ) dz + \tilde{p}(y,t).
\end{split}
\]
For $\tilde{p}$ we have a trivial pointwise estimate using the separation of $y (T-t)^{\frac{1}{1+\a}} $ and  $z$ inside the kernel:
\[
\tilde{p}(y,t) \lesssim (T-t)^{\frac{2\a}{1+\a}} \|u\|_2^2.
\]
Using \eqref{qdp} we continue the line above (suppressing the integrands for short):
\[
\begin{split}
&-\frac{1}{N}|v(y)|^2 + \int_{|z| \leq \r_0 / (T-t)^{\frac{1}{1+\a}}}  + \tilde{p}(y,t) \\ & = q(y) - d(t) = I(y)+h(y)-d(t) \\
&= -\frac{1}{N}|v(y)|^2+\int_{|z| \leq \r_0 / (T-t)^{\frac{1}{1+\a}}}+\int_{|z| > \r_0 / (T-t)^{\frac{1}{1+\a}}}
\\
&+h(y) - d(t).
\end{split}
\]
Denoting $\tilde{\tilde{p}}(y,t) = \int_{|z| > \r_0 / (T-t)^{\frac{1}{1+\a}}}$ we estimate as before,
\[
|\tilde{\tilde{p}}(y,t) | \lesssim  (T-t)^{\frac{2\a}{1+\a}}.
\]
So, from the identity above,
\[
h(y) - d(t) = \tilde{p}(y,t)-\tilde{\tilde{p}}(y,t),
\]
and thus $|h(y) - d(t)| \leq C (T-t)^{\frac{2\a}{1+\a}}$, for all $|y| \leq \frac{\r_0}{2(T-t)^{\frac{1}{1+\a}}}$. Given that $h$ is a polynomial, this can only be true if $h$ is constant and $d(t) = h + O((T-t)^{\frac{2\a}{1+\a}})$.
\end{proof}

\section{Proof of \thm{t:main}}
Let us note again that the upper bound in \eqref{e:2-bound} is a consequence of the energy conservation of the ambient solutions $u$. So, we focus on establishing the lower bound. Our starting point is the following local energy inequality derived in \cite{chae-shv}:
\[
\begin{split}
& \left|\frac{1}{l_2^{N-2\alpha}}\int_{|y|\leq
l_2}|v(y)|^2\sigma(y/l_2)dy-\frac{1}{l_1^{N-2\alpha}}\int_{|y|\leq
l_1}|v(y)|^2\sigma(y/l_1)dy\right| \\
& \leq C\int_{l_1/2\leq|y|\leq
l_2}\frac{|v|^3+|q||v|}{|y|^{N+1-2\alpha}}dy.
\end{split}
\]
Here $\s$ is a smooth cut-off function, $\s(y) = 1$ on $|y|<1$ and $\s = 0$ for $|y|>2$, and $l_1<l_2$. Assuming, on the contrary, that there is a sequence of $L_n$'s so that $L_n^{2\a} \ave{|v|^2}{L_n} \ra 0$ we let $l_2 = L_n$ and as a result in the limit obtain the following inequality (replacing $l_1/2$ with $L$)
\[
 \ave{|v|^2}{L} \lesssim \frac{1}{L^{2\a}}\int_{|y|>L} \frac{|v|^3+|q||v|}{|y|^{N+1-2\alpha}}dy.
\]
Rewriting it all in terms of averages and using H\"older on the pressure term we obtain
\[
\ave{|v|^2}{L} \lesssim \frac{1}{L} \sum_{k=1}^\infty \frac{1}{2^{k(1-2\a)}} \left( \aave{|v|^3}{2^kL}{2^{k+1}L} + \aave{|v|^3}{2^kL}{2^{k+1}L}^{1/3} \aave{|q|^{3/2}}{2^kL}{2^{k+1}L}^{2/3} \right),
\]
and as a consequence of \eqref{press-bound},
\begin{equation}\label{e:enin}
\ave{|v|^2}{L} \lesssim \frac{1}{L} \sum_{k=1}^\infty  \frac{1}{2^{k(1-2\a)}} \left( \aave{|v|^3}{2^kL}{2^{k+1}L}+ \aave{|v|^3}{2^kL}{2^{k+1}L}^{1/3} \sum_{l=1}^\infty \ave{|v|^2}{2^{k+l}L} \right)
\end{equation}

Now we initiate a bootstrap procedure on decay rates of the $L^3$ and $L^2$-averages. We will repeatedly use interpolation inequality with $\th = \frac{p-3}{p-2}$:
\[
\langle |v|^3 \rangle \leq \langle |v|^2 \rangle^{\th} \langle |v|^p \rangle^{1-\th}.
\]
So, from the start we have 
\[
\ave{|v|^2}{L} \lesssim \frac{1}{L^{2\a}}, \text{ and } \aave{|v|^{p}}{L}{2L} \lesssim \frac{1}{L^{N-\g}}.
\]
Let us denote $a_0 = 2\a$ and $c = N-\g$. By assumption, $a_0>0$ and $c>N+2-p$. By interpolation we then have
\[
\aave{|v|^{3}}{L}{2L} \lesssim \frac{1}{L^{\th a_0 + (1-\th)c}}.
\]
Denote $b_0 = \th a_0 + (1-\th)c$. Plugging this into \eqref{e:enin} we obtain a new decay rate for the energy, $\ave{|v|^2}{L} \lesssim \frac{1}{L^{a_1}}$, where $a_1$ is determined by the following condition: if $\frac32 a_0 \geq b_0$, then the energy terms on the r.h.s. of \eqref{e:enin} are of lower order, and thus $a_1 = b_0+1$; otherwise, $a_1 = \frac13 b_0+a_0+1$. Once $a_1$ is determined, the rate of $L^3$-average is obtained by interpolation again, $b_1 = \th a_1 + (1-\th) c$. Continuing this way we obtain a sequence of pairs $(a_0,b_0),(a_1,b_1),\ldots$ constructed by the same principle: $a_{n+1} = b_n+1$ provided $\frac32 a_n \geq b_n$, or otherwise, $a_{n+1} = \frac13 b_n + a_n+1$; then $b_{n+1} = \th a_{n+1} + (1-\th) c$. We now state the following claim.
\begin{claim}
If $0\leq \th <1$, $a_0\geq 0$, and $c > \frac{2\th - 3}{1-\th}$, then either $a_n \ra +\infty$ or $\lim_{n\ra \infty} a_n = c + \frac{1}{1-\th}$.
\end{claim}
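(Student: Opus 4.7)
The plan is to treat the recursion as a piecewise affine dynamical system, to identify the attracting fixed point of each branch, and to verify that the hypothesis positions the breakpoint correctly relative to the target value $A := c + \frac{1}{1-\th}$. Substituting $b_n = \th a_n + (1-\th)c$, the dichotomy $\frac{3}{2} a_n \geq b_n$ becomes $a_n \geq a_* := \frac{2(1-\th)c}{3-2\th}$. Call Regime I (resp. II) the side $a_n \geq a_*$ (resp. $a_n < a_*$). In Regime I the iteration is $f(a) := \th a + (1-\th) c + 1$, whose unique fixed point is exactly $A$; in Regime II it is $g(a) := (1+\tfrac{\th}{3}) a + \tfrac{(1-\th)c}{3} + 1$, whose fixed point (for $\th > 0$) is $a_{**} := -[(1-\th)c + 3]/\th$. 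A direct calculation gives
\[
A - a_* = \frac{c}{3-2\th} + \frac{1}{1-\th},
\]
so the hypothesis $c > (2\th-3)/(1-\th)$ is precisely the condition $A > a_*$, and the same comparison forces $a_{**} < 0 \leq a_0$.

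I would then analyze each regime separately. In Regime I, $f(a) - A = \th(a - A)$ is a contraction toward $A$ with ratio $\th < 1$, and the computation $f(a) = \th a + (1-\th) A > \th a_* + (1-\th) a_* = a_*$ (valid for $a \geq a_*$, using $A > a_*$) shows that Regime I is forward invariant. Hence once $a_n$ enters Regime I, it stays there and $a_n \to A$ geometrically. In Regime II, since $a_0 > a_{**}$ and the slope $1 + \th/3$ exceeds one, the shifted sequence $a_n - a_{**}$ is strictly increasing and grows geometrically (the degenerate case $\th = 0$ being an arithmetic progression with positive step $c/3 + 1$, which is positive by the hypothesis). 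Consequently either the sequence remains in Regime II for every $n$, in which case $a_n \to +\infty$, or it crosses the threshold $a_*$ at some finite smallest index $n_0$.

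The step that needs the most care is showing that the transition from Regime II to Regime I does not overshoot $A$. The decisive fact is that the two affine branches match at the breakpoint, so $g(a_*) = f(a_*) = \th a_* + (1-\th) A$, and this common value is strictly below $A$ because $a_* < A$. Monotonicity of $g$ then yields
\[
a_* \leq a_{n_0} = g(a_{n_0-1}) < g(a_*) = f(a_*) < A,
\]
placing $a_{n_0}$ inside $[a_*, A)$. Applying the Regime I analysis from the index $n_0$ gives $a_n \to A$ monotonically, which completes the dichotomy. The main obstacle is precisely this overshoot check at the transition: if one had $A \leq a_*$, the Regime I map would push trajectories back through the threshold and sustained oscillation between the two regimes would be possible; the inequality $A > a_*$ supplied by the hypothesis on $c$ is exactly what rules this out and makes the two alternatives of the claim the only ones.
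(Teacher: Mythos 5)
Your proposal is correct and, despite the fixed-point/dynamical-systems dressing, takes essentially the same route as the paper: both arguments show that the hypothesis on $c$ makes the first branch ($\tfrac32 a_n \geq b_n$, i.e.\ $a_n \geq a_*$) forward invariant, so that the affine contraction with ratio $\th$ converges geometrically to $c+\tfrac{1}{1-\th}$, while permanent residence in the second branch gives an expanding affine recursion with positive increment and hence $a_n \to +\infty$. The overshoot check at the transition is a harmless extra (the Regime I invariance-plus-contraction already handles any landing point $\geq a_*$; note also that at $\th=0$ one has $f(a_*)=A$ rather than strictly below), so the substance coincides with the paper's proof.
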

Let us take the claim for granted for a moment. The assumptions of the claim are easy to verify given the hypotheses on $\g$ and $p$. In our terms we have $c+ \frac{1}{1-\th} = N-\g + p-2$. Since $\g < p-2$, we eventually reach the bound $\int_{|y|<L}|v|^2 dy \lesssim L^{-\e}$ for some $\e>0$, which implies $v \equiv 0$. It remains to prove the claim.

Suppose that on some $m$th step the inequality $\frac32 a_{m}\geq b_{m}$  is verified. Then $a_{m+1} = b_m+1$ and $b_{m+1} = \th a_{m+1} + (1-\th)c = \th b_m + \th+(1-\th)c$. Hence,
\[
\begin{split}
\frac 32 a_{m+1} & = \frac32 (b_m+1) = \frac32(\th a_m+(1-\th)c + 1)= \th \frac32 a_m + \frac32(1-\th)c + \frac32 \\
&> \th b_m+(1-\th)c + \th = b_{m+1},
\end{split}
\]
where the latter holds in view of the assumption on $c$. So, for subsequent pairs the algorithm stabilizes into a pattern. By recursion we obtain for $n>m$
\[
a_{n} = \th^{n-m} a_m+ ((1-\th)c+1)(1+\th+\cdots \th^{n-m-1}),
\]
which tends to $c+\frac{1}{1-\th}$ as $n\ra \infty$.  Now, if $\frac32 a_n< b_n$ holds for all $n$, we obtain by recursion 
\[
a_n = \g_1^n a_0 + (1+\g_1+\cdots + \g_1^{n-1})(\g_2 c+1),
\]
where $\g_1 = \frac13 \th+1$ and $\g_2 = \frac13 (1-\th)$. Noting that $\g_2 c+1>0$, we obtain $a_n \ra \infty$. This completes the proof of the theorem.

\medskip

It is curious to note that under the hypothesis of sublinear growth, $|v(y)| \lesssim |y|^{1-\d}$, one can alternatively prove the theorem via a more straightforward scheme without reliance on the higher $L^p$-bounds. Indeed, observe that
\[
\aave{|v|^3}{L}{2L} \lesssim L^{1-\d} L^{-2\a} = L^{-2\a +1 - \d}.
\]
The energy terms on the right hand side of \eqref{e:enin} are of smaller order on this and all the subsequent steps. Substituting into \eqref{e:enin} we find an improved bound $\ave{|v|^2}{L} \lesssim L^{-2\a - \d}$, and hence, $\aave{|v|^3}{L}{2L} \lesssim L^{-2\a - 2\d+1}$. Applying \eqref{e:enin} again, $\ave{|v|^2}{L} \lesssim L^{-2\a - 2\d}$.  On the $n$-th step we obtain $\ave{|v|^2}{L} \lesssim L^{-2\a - n\d}$, which clearly leads to $v \equiv 0 $.


\end{document}